\def\getdelim#1#2#3#4\relax{"#4}
\newtheorem{thm}{Theorem}[section]
\newtheorem{lem}[thm]{Lemma}
\theoremstyle{definition}
\newtheorem{defn}[thm]{Definition}
\theoremstyle{remark} \numberwithin{equation}{section}
\begin{document}
\title[]{Symmetric positive solutions for a fractional singular integro-differential boundary value problem in presence of Caputo-Fabrizio fractional derivative}
\date{}
\author{Naseer Ahmad Asif}
\address{Department of Mathematics, University of Management and Technology, C-II Johar Town, 54770 Lahore, Pakistan}%
\email{naseerasif@yahoo.com}%
\keywords{Caputo and Fabrizio; Integro-differential equations; Positive solutions; Singular boundary value problems.}
\begin{abstract}

{We introduce the notion of Caputo-Fabrizio left and right derivatives. We present sufficient conditions for the existence of symmetric positive solutions for the following Caputo-Fabrizio fractional singular integro-differential boundary value problem
\begin{align*}
(2-\mu)\,{}^{CF}D_{0}^{\,\mu}x(t)+f(t,x(t))&=\left(\frac{\mu-1}{2-\mu}\right)^{2}
\begin{cases}
\int_{t}^{0}e^{-\frac{\mu-1}{2-\mu}(\tau-t)}\,x(\tau)d\tau,\hspace{0.4cm}&t\in(-1,0],\\
\int_{0}^{t}e^{-\frac{\mu-1}{2-\mu}(t-\tau)}\,x(\tau)d\tau,\hspace{0.4cm}&t\in[0,1),
\end{cases}\\
x(\pm1)=x'(0^{\pm})&=0,\hspace{5.55cm}\mu\in(1,2),
\end{align*}
where the nonlinearity $f:(-1,\,1)\times(0,\infty)\rightarrow\mathbb{R}$ is continuous and singular at $t=-1$, $t=1$ and $x=0$.}\end{abstract} \maketitle

\section{introduction}

Recently, the author presented the existence of symmetric positive solutions for the following Caputo fractional singular boundary value problems (SBVPs)
\begin{equation}\label{cbvp}\begin{split}
{}^{C}D_{0}^{\,\mu}x(t)+f(t,x(t))&=\omega\,x(t),\hspace{0.4cm}t\in(-1,\,1),\hspace{0.4cm}1<\mu\leq2,\\
x(\pm1)=x'(0^{\pm})&=0,
\end{split}\end{equation}
for $\omega=0$ \cite{naseer1}, and for $\omega>0$ \cite{naseer2}. Here ${}^{C}D_{0}^{\,\mu}x(t)={}^{C}D_{0^{+}}^{\,\mu}x(t)$ for $t\geq0$, ${}^{C}D_{0}^{\,\mu}x(t)={}^{C}D_{0^{-}}^{\,\mu}x(t)$ for $t\leq0$. Moreover, $f:(-1,\,1)\times(0,\infty)\rightarrow\mathbb{R}$ is continuous and singular at $t=-1$, $t=1$ and $x=0$. Here, ${}^{C}D_{0^{+}}^{\,\mu}$ and ${}^{C}D_{0^{-}}^{\,\mu}$, respectively, are Caputo fractional left and right derivatives of order $\mu$.

In this manuscript the author generalize the definition of Caputo-Fabrizio fractional derivative for an arbitrary order and present the definitions of Caputo-Fabrizio fractional left and right derivatives. Further, to utilize these notions, the existence results for the following Caputo-Fabrizio fractional singular integro-differential boundary value problem (SIDBVP)
\begin{equation}\label{mp}\begin{split}
(2-\mu)\,{}^{CF}D_{0}^{\,\mu}x(t)+f(t,x(t))&=\left(\frac{\mu-1}{2-\mu}\right)^{2}
\begin{cases}
\int_{t}^{0}e^{-\frac{\mu-1}{2-\mu}(\tau-t)}\,x(\tau)d\tau,\hspace{0.4cm}&t\in(-1,0]\\
\int_{0}^{t}e^{-\frac{\mu-1}{2-\mu}(t-\tau)}\,x(\tau)d\tau,\hspace{0.4cm}&t\in[0,1),
\end{cases}\\
x(\pm1)=x'(0^{\pm})&=0,\hspace{6.3cm}\mu\in(1,2),
\end{split}\end{equation}
are presented, where ${}^{CF}D_{0}^{\,\mu}x(t)={}^{CF}D_{0^{+}}^{\,\mu}x(t)$ for $t\geq0$, ${}^{CF}D_{0}^{\,\mu}x(t)={}^{CF}D_{0^{-}}^{\,\mu}x(t)$ for $t\leq0$. Moreover, $f:(-1,\,1)\times(0,\infty)\rightarrow\mathbb{R}$ is continuous and singular at $t=-1$, $t=1$ and $x=0$. Here, ${}^{CF}D_{0^{+}}^{\,\mu}$ and ${}^{CF}D_{0^{-}}^{\,\mu}$, respectively, are Caputo-Fabrizio fractional left and right derivatives of order $\mu$. We formulate sufficient conditions for the existence of symmetric positive solutions for Caputo-Fabrizio fractional SIDBVP \eqref{mp}. By a symmetric positive solution $x$ of Caputo-Fabrizio fractional SIDBVP \eqref{mp} we mean $x\in C[-1,\,1]$ satisfies \eqref{mp}, $x(t)=x(-t)$ for $t\in[-1,\,1]$ and $x(t)>0$ for $t\in(-1,\,1)$. For Caputo-Fabrizio fractional SIDBVP \eqref{mp}, two point boundary conditions (BCs) are $x(\pm1)=0$, whereas $x'(0^{\pm})=0$ are natural conditions followed by a function $x$ which is symmetric and concave on $[-1,\,1]$.

BVPs involving fractional order differentials have become an emerging area of recent research in science, engineering and mathematics, \cite{kilbas,lak1,lak3,miller,podlubny}. Applying results of nonlinear functional analysis and fixed point theory, many articles have been devoted to the study the existence of solutions for fractional order BVPs \cite{eloe,jiang,xu,zhang,zhanghu,zhao}. Further, many authors considered the SBVPs involving fractional derivatives, for details see \cite{afshari,agarwal,shahed,stanek,zhangmao}. Nonlinear IDBVPs has formulated an important area of research as these equations represents many continuum phenomena and arise in a various fields of science such as population dynamics, ecology, fluid mechanics, aerodynamics, etc \cite{ahmad,baleanu,chang,kosmatov,lak2,sun}. However, most of the previous results has been obtained for BVPs involving Riemann-Liouville and Caputo fractional derivatives. Further, most of the work has been devoted to fractional order left derivatives only. Therefore, study of Caputo-Fabrizio fractional SIDBVP \eqref{mp} is important due to presence of Caputo-Fabrizio fractional left and right derivatives.

To the author's best knowledge, \eqref{mp} is a new form of fractional differential equation. Further, no paper in the existing literature has formulated the existence of symmetric positive solutions for BVPs containing Caputo-Fabrizio fractional derivative. An important features of this manuscript is that the Green's function of the Caputo-Fabrizio fractional linear IDBVP corresponding to \eqref{mp} is symmetric. The manuscript is organized as follows. In Section \ref{pre}, we presents generalized definitions of Caputo-Fabrizio fractional left and right derivatives, some preliminary lemmas for the construction of Green's function. Some properties of Green's function are also presented. In Section \ref{main}, the existence of symmetric positive solutions has been established in Theorem \ref{mainth}.

\section{preliminaries}\label{pre}
We generalize the definition of Caputo-Fabrizio fractional derivative \cite{CF} and define the Caputo-Fabrizio fractional left and right derivatives as follows.

\begin{defn}
The Caputo-Fabrizio fractional left derivative of a function $x\in AC^{\,n}[0,\infty)$, $n\in\mathbb{N}$, of order $\mu\in(n-1,n)$ is defined as
\begin{equation}\label{cfdev}
{}^{CF}D_{0^{+}}^{\,\mu}\,x(t)=\frac{1}{n-\mu}\int_{0}^{t}e^{-\frac{\mu-n+1}{n-\mu}(t-\tau)}\,x^{(n)}(\tau)d\tau. \end{equation}
\end{defn}

\begin{defn}
The Caputo-Fabrizio fractional right derivative of a function $x\in AC^{\,n}(-\infty,0]$, $n\in\mathbb{N}$, of order $\mu\in(n-1,n)$ is defined as
\begin{equation}\label{cfdev}
{}^{CF}D_{0^{-}}^{\,\mu}\,x(t)=\frac{(-1)^{n}}{n-\mu}\int_{t}^{0}e^{-\frac{\mu-n+1}{n-\mu}(\tau-t)}\,x^{(n)}(\tau)d\tau.\end{equation}
\end{defn}

\begin{lem}\label{rightsol}
Let $\mu\in(1,2)$, $y:(-\infty,0]\rightarrow\mathbb{R}$ with $y(0)=0$. Then the Caputo-Fabrizio fractional integro-differential equation
\begin{align*}
(2-\mu)\,{}^{CF}D_{0^{-}}^{\,\mu}x(t)+y(t)&=\left(\frac{\mu-1}{2-\mu}\right)^{2}\int_{t}^{0}e^{-\frac{\mu-1}{2-\mu}(\tau-t)}x(\tau)d\tau,\hspace{0.4cm}t\leq0,
\end{align*} has general solution
\begin{align*}
x(t)=a_{1}\cosh[\frac{\mu-1}{2-\mu}t]+a_{2}\sinh[\frac{\mu-1}{2-\mu}t]
-\int_{t}^{0}e^{\frac{\mu-1}{2-\mu}(\tau-t)}\,y(\tau)d\tau,\hspace{0.4cm}t\leq0,
\end{align*}
where $a_{1},a_{2}\in\mathbb{R}$.
\end{lem}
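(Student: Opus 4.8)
The plan is to reduce the integro-differential equation to a classical second order linear ODE by differentiation, using the special structure of the exponential kernel, and then to read off the general solution. Since $\mu\in(1,2)$ we are in the case $n=2$, so by the definition of the Caputo-Fabrizio right derivative we have ${}^{CF}D_{0^{-}}^{\,\mu}x(t)=\frac{1}{2-\mu}\int_{t}^{0}e^{-\frac{\mu-1}{2-\mu}(\tau-t)}x''(\tau)d\tau$. Substituting this into the equation, the prefactor $(2-\mu)$ cancels the $\frac{1}{2-\mu}$; writing $\lambda=\frac{\mu-1}{2-\mu}>0$ and moving the right-hand integral to the left, the equation collapses to the single identity $\int_{t}^{0}e^{-\lambda(\tau-t)}\bigl(x''(\tau)-\lambda^{2}x(\tau)\bigr)d\tau=-y(t)$ for $t\le0$.

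Next I would factor the kernel as $e^{-\lambda(\tau-t)}=e^{\lambda t}e^{-\lambda\tau}$ and pull $e^{\lambda t}$ outside the integral, rewriting the equation as $\int_{t}^{0}e^{-\lambda\tau}\bigl(x''(\tau)-\lambda^{2}x(\tau)\bigr)d\tau=-e^{-\lambda t}y(t)$; call this identity $(\ast)$. Differentiating $(\ast)$ in $t$ eliminates the integral: by the fundamental theorem of calculus the left side yields $-e^{-\lambda t}\bigl(x''(t)-\lambda^{2}x(t)\bigr)$, while the right side gives $e^{-\lambda t}\bigl(\lambda y(t)-y'(t)\bigr)$. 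Cancelling the common factor $-e^{-\lambda t}$ leaves the linear constant-coefficient ODE $x''(t)-\lambda^{2}x(t)=y'(t)-\lambda y(t)$.

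I would then solve this ODE. The associated homogeneous equation $x''-\lambda^{2}x=0$ has characteristic roots $\pm\lambda$, hence homogeneous solution $a_{1}\cosh(\lambda t)+a_{2}\sinh(\lambda t)$. For a particular solution I would take $x_{p}(t)=-\int_{t}^{0}e^{\lambda(\tau-t)}y(\tau)d\tau=-e^{-\lambda t}\int_{t}^{0}e^{\lambda\tau}y(\tau)d\tau$ and verify by direct differentiation that $x_{p}'+\lambda x_{p}=y$, whence $x_{p}''-\lambda^{2}x_{p}=y'-\lambda y$, so $x_{p}$ solves the ODE. Adding this to the homogeneous part reproduces exactly the claimed general solution.

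The step that needs care — and the only place the hypothesis $y(0)=0$ is used — is the equivalence between $(\ast)$ and the differentiated ODE. Differentiation alone shows that, once the ODE holds, the two sides of $(\ast)$ differ only by a constant; to identify this constant I would evaluate $(\ast)$ at $t=0$, where the integral vanishes and the right side equals $-y(0)$. Thus $(\ast)$ holds for all $t\le0$ if and only if the ODE holds and $y(0)=0$. Under the standing assumption $y(0)=0$ the reduction is therefore reversible, so a function solves the original integro-differential equation precisely when it solves the ODE, and the general solution is exactly $a_{1}\cosh(\lambda t)+a_{2}\sinh(\lambda t)-\int_{t}^{0}e^{\lambda(\tau-t)}y(\tau)d\tau$ with $a_{1},a_{2}\in\mathbb{R}$.
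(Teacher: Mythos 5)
Your argument is correct, and it is worth noting that the paper states Lemma \ref{rightsol} with no proof at all, so there is nothing in the source to compare against: your reduction supplies exactly what the paper omits, and it is the natural one. Writing $\lambda=\frac{\mu-1}{2-\mu}$, the collapse of the equation to $\int_{t}^{0}e^{-\lambda(\tau-t)}\bigl(x''(\tau)-\lambda^{2}x(\tau)\bigr)\,d\tau=-y(t)$, the differentiation to $x''-\lambda^{2}x=y'-\lambda y$, the check that $x_{p}(t)=-\int_{t}^{0}e^{\lambda(\tau-t)}y(\tau)\,d\tau$ satisfies $x_{p}'+\lambda x_{p}=y$ and hence the second-order equation, and the identification of $y(0)=0$ as precisely the integration constant that makes the differentiated equation equivalent to the original one, all verify. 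Two minor remarks. First, your route differentiates $y$, which the lemma's threadbare hypotheses ($y$ is merely a function with $y(0)=0$) do not license; this is harmless because the equation itself forces $y$ to inherit enough regularity from an $AC^{2}$ solution $x$, but the step deserves a sentence. Second, there is a reduction that never touches $y'$: integrating $\int_{t}^{0}e^{-\lambda(\tau-t)}x''(\tau)\,d\tau$ by parts twice cancels the term $\lambda^{2}\int_{t}^{0}e^{-\lambda(\tau-t)}x(\tau)\,d\tau$ on the right and leaves the first-order equation $x'(t)+\lambda x(t)=y(t)+e^{\lambda t}\bigl(x'(0)+\lambda x(0)\bigr)$, which one integrating factor solves to give the same two-parameter family, with evaluation at $t=0$ again returning the necessary condition $y(0)=0$. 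Either way the lemma is established; your proposal stands.
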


\begin{lem}\label{leftsol}
Let $\mu\in(1,2)$, $y:[0,\infty)\rightarrow\mathbb{R}$ with $y(0)=0$. Then the Caputo-Fabrizio fractional integro-differential equation
\begin{align*}
(2-\mu)\,{}^{CF}D_{0^{+}}^{\,\mu}x(t)+y(t)&=\left(\frac{\mu-1}{2-\mu}\right)^{2}\int_{0}^{t}e^{-\frac{\mu-1}{2-\mu}(t-\tau)}x(\tau)d\tau,\hspace{0.4cm}t\geq0,
\end{align*} has general solution
\begin{align*}
x(t)=b_{1}\cosh[\frac{\mu-1}{2-\mu}t]+b_{2}\sinh[\frac{\mu-1}{2-\mu}t]
-\int_{0}^{t}e^{\frac{\mu-1}{2-\mu}(t-\tau)}\,y(\tau)d\tau,\hspace{0.4cm}t\geq0,
\end{align*}
where $b_{1},b_{2}\in\mathbb{R}$.
\end{lem}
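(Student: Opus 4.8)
The plan is to reduce the integro-differential equation to an elementary first-order linear ODE and then integrate it explicitly. Throughout write $\lambda=\frac{\mu-1}{2-\mu}$, which is positive for $\mu\in(1,2)$. Taking $n=2$ in the definition of the Caputo--Fabrizio fractional left derivative gives
\[
(2-\mu)\,{}^{CF}D_{0^{+}}^{\,\mu}x(t)=\int_{0}^{t}e^{-\lambda(t-\tau)}\,x''(\tau)\,d\tau,
\]
so the equation of the lemma becomes
\[
\int_{0}^{t}e^{-\lambda(t-\tau)}\,x''(\tau)\,d\tau+y(t)=\lambda^{2}\int_{0}^{t}e^{-\lambda(t-\tau)}\,x(\tau)\,d\tau,\qquad t\ge0.
\]

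The crucial step is to integrate the left-hand integral by parts twice. Differentiating the kernel $e^{-\lambda(t-\tau)}$ in $\tau$ each time, the first integration by parts yields the boundary contribution $x'(t)-e^{-\lambda t}x'(0)$ together with $-\lambda\int_{0}^{t}e^{-\lambda(t-\tau)}x'(\tau)\,d\tau$; a second integration by parts on this integral produces a further boundary contribution and, decisively, the term $\lambda^{2}\int_{0}^{t}e^{-\lambda(t-\tau)}x(\tau)\,d\tau$. This is precisely the right-hand side, so the two integrals cancel and the second-order integro-differential equation collapses to the first-order linear ODE
\[
x'(t)-\lambda x(t)=\left(x'(0)-\lambda x(0)\right)e^{-\lambda t}-y(t),\qquad t\ge0.
\]
I regard this cancellation as the heart of the matter and the main point to notice: it is what explains why the solution family carries only two free constants although a second derivative is present, and it is also where the hypothesis $y(0)=0$ is used, since evaluating the original equation at $t=0$, where both integrals vanish, forces $y(0)=0$ as a consistency condition.

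It remains to solve the first-order ODE. Multiplying by the integrating factor $e^{-\lambda t}$ rewrites the left-hand side as $\frac{d}{dt}(e^{-\lambda t}x(t))$; integrating from $0$ to $t$ and multiplying back by $e^{\lambda t}$ gives
\[
x(t)=x(0)\,e^{\lambda t}+\frac{x'(0)-\lambda x(0)}{2\lambda}\left(e^{\lambda t}-e^{-\lambda t}\right)-\int_{0}^{t}e^{\lambda(t-\tau)}y(\tau)\,d\tau.
\]
Using $e^{\lambda t}=\cosh(\lambda t)+\sinh(\lambda t)$ and $\frac{1}{2\lambda}(e^{\lambda t}-e^{-\lambda t})=\frac{1}{\lambda}\sinh(\lambda t)$ and collecting terms, the coefficient of $\cosh(\lambda t)$ simplifies to $b_{1}=x(0)$ and that of $\sinh(\lambda t)$ to $b_{2}=x'(0)/\lambda$, which is exactly the asserted formula. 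Since the double integration by parts is an identity and the integrating-factor formula is the general solution of the first-order ODE, every step reverses; hence the two-parameter family obtained is precisely the general solution, the constants $b_{1},b_{2}$ being in one-to-one correspondence with the initial data $(x(0),x'(0))$. This mirrors Lemma \ref{rightsol}, into which it is in fact carried by the reflection $t\mapsto-t$.
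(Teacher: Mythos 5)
Your proof is correct. Note that the paper itself states Lemma \ref{leftsol} (like Lemma \ref{rightsol}) without any proof, so there is no argument of the author's to compare against; your derivation supplies the missing details in the natural way. The computation checks out: with $\lambda=\tfrac{\mu-1}{2-\mu}$, two integrations by parts give
\[
\int_{0}^{t}e^{-\lambda(t-\tau)}x''(\tau)\,d\tau=x'(t)-\lambda x(t)-e^{-\lambda t}\bigl(x'(0)-\lambda x(0)\bigr)+\lambda^{2}\int_{0}^{t}e^{-\lambda(t-\tau)}x(\tau)\,d\tau,
\]
so the $\lambda^{2}$--integrals cancel, the equation reduces to the first-order ODE you state, and the integrating factor yields exactly the claimed two-parameter family with $b_{1}=x(0)$ and $b_{2}=x'(0)/\lambda$; since the integration-by-parts identity is reversible, the equivalence goes both ways. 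Your observation that $y(0)=0$ is a solvability (consistency) condition rather than an ingredient of the computation is also accurate.
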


\begin{lem}\label{lemir}
For $\mu\in(1,2)$, $y(0)=0$, the Caputo-Fabrizio fractional IDBVP
\begin{equation}\label{cfir}\begin{split}
(2-\mu)\,{}^{CF}D_{0}^{\,\mu}x(t)+y(t)&=\left(\frac{\mu-1}{2-\mu}\right)^{2}
\begin{cases}
\int_{t}^{0}e^{-\frac{\mu-1}{2-\mu}(\tau-t)}x(\tau)d\tau,\hspace{0.4cm}&t\in(-1,0]\\
\int_{0}^{t}e^{-\frac{\mu-1}{2-\mu}(t-\tau)}x(\tau)d\tau,\hspace{0.4cm}&t\in[0,1),
\end{cases}\\
x(\pm1)=x'(0^{\pm})&=0,
\end{split}\end{equation}
has integral representation
\begin{equation}\label{ir}
x(t)=\int_{\Lambda}G(t,\tau)y(\tau)d\tau,\hspace{0.4cm}t\in\Lambda=[-1,0],[0,1],
\end{equation}
where
\begin{equation}\label{gf}
G(t,\tau)=\begin{cases}
\frac{\cosh[\frac{\mu-1}{2-\mu}t]}{\cosh[\frac{\mu-1}{2-\mu}]}e^{\frac{\mu-1}{2-\mu}(1+\tau)},\,&-1\leq\tau\leq t\leq0,\\
\frac{\cosh[\frac{\mu-1}{2-\mu}t]}{\cosh[\frac{\mu-1}{2-\mu}]}e^{\frac{\mu-1}{2-\mu}(1+\tau)}-e^{\frac{\mu-1}{2-\mu}(\tau-t)},\,&-1\leq t\leq\tau\leq0,\\
\frac{\cosh[\frac{\mu-1}{2-\mu}t]}{\cosh[\frac{\mu-1}{2-\mu}]}e^{\frac{\mu-1}{2-\mu}(1-\tau)}-e^{\frac{\mu-1}{2-\mu}(t-\tau)},\,&\,\,\,\,\,0\leq\tau\leq t\leq1,\\
\frac{\cosh[\frac{\mu-1}{2-\mu}t]}{\cosh[\frac{\mu-1}{2-\mu}]}e^{\frac{\mu-1}{2-\mu}(1-\tau)},\,&\,\,\,\,\,0\leq t\leq\tau\leq1.
\end{cases}\end{equation}
\end{lem}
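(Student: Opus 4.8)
The plan is to glue together the two one-sided general solutions supplied by Lemmas \ref{rightsol} and \ref{leftsol}, observing that \eqref{cfir} decouples into two independent boundary value problems --- one on $[-1,0]$ governed by the right derivative, one on $[0,1]$ governed by the left derivative --- each carrying exactly two of the four boundary conditions. Writing $k:=\frac{\mu-1}{2-\mu}>0$ for brevity, Lemma \ref{rightsol} gives on $[-1,0]$
\[
x(t)=a_1\cosh(kt)+a_2\sinh(kt)-\int_t^0 e^{k(\tau-t)}y(\tau)\,d\tau,
\]
while Lemma \ref{leftsol} gives on $[0,1]$
\[
x(t)=b_1\cosh(kt)+b_2\sinh(kt)-\int_0^t e^{k(t-\tau)}y(\tau)\,d\tau,
\]
with constants $a_1,a_2,b_1,b_2\in\R$ to be fixed by the conditions in \eqref{cfir}.

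First I would impose the natural conditions $x'(0^{\pm})=0$. Differentiating each piece by the Leibniz rule produces a boundary term (namely $-y(t)$ on $[0,1]$ and $+y(t)$ on $[-1,0]$, the sign difference coming from the reversed integration limits) together with an integral that vanishes at $t=0$. Evaluating at $t=0$ and invoking the hypothesis $y(0)=0$ leaves $x'(0^{+})=kb_2$ and $x'(0^{-})=ka_2$; since $k\neq0$ this forces $a_2=b_2=0$, so each piece collapses to a single $\cosh$ term plus its particular integral. The endpoint conditions then read $x(-1)=a_1\cosh k-\int_{-1}^0 e^{k(1+\tau)}y(\tau)\,d\tau=0$ and $x(1)=b_1\cosh k-\int_0^1 e^{k(1-\tau)}y(\tau)\,d\tau=0$, which determine $a_1$ and $b_1$ explicitly.

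The final step is to substitute the constants back and read off the kernel. On $[-1,0]$ I would split the integral carrying $a_1$ at $\tau=t$: for $\tau\in[-1,t]$ only the $a_1$-term survives, giving $G(t,\tau)=\frac{\cosh(kt)}{\cosh k}e^{k(1+\tau)}$, whereas for $\tau\in[t,0]$ the particular integral contributes the extra $-e^{k(\tau-t)}$; these are precisely the first two branches of \eqref{gf}. The computation on $[0,1]$ is the mirror image, splitting at $\tau=t$ to produce the last two branches. Assembling the four cases yields \eqref{ir} with $G$ as in \eqref{gf}.

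No step is a genuine obstacle; the only points demanding care are the Leibniz differentiation, where the boundary term changes sign between the two intervals, and the two integral splits, which must correctly pair the factor $\cosh(kt)/\cosh k$ with the exponential weight $e^{k(1\pm\tau)}$ in each region.
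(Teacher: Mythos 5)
Your proposal is correct and follows essentially the same route as the paper: invoke Lemmas \ref{rightsol} and \ref{leftsol} on the two half-intervals, use $x'(0^{\pm})=0$ together with $y(0)=0$ to kill the $\sinh$ terms, use $x(\pm1)=0$ to fix the $\cosh$ coefficients, and split the resulting integrals at $\tau=t$ to read off the four branches of \eqref{gf}. The only difference is that you spell out the determination of the constants (which the paper compresses into ``employing the BCs''), and your sign bookkeeping in the Leibniz differentiation is accurate.
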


\begin{proof}
In view of \eqref{cfir}, we have
\begin{align*}
(2-\mu)\,{}^{CF}D_{0^{-}}^{\,\mu}x(t)+y(t)&=\left(\frac{\mu-1}{2-\mu}\right)^{2}
\int_{t}^{0}e^{-\frac{\mu-1}{2-\mu}(\tau-t)}x(\tau)d\tau,\hspace{0.4cm}t\in(-1,0],\\
(2-\mu)\,{}^{CF}D_{0^{+}}^{\,\mu}x(t)+y(t)&=\left(\frac{\mu-1}{2-\mu}\right)^{2}
\int_{0}^{t}e^{-\frac{\mu-1}{2-\mu}(t-\tau)}x(\tau)d\tau,\hspace{0.4cm}t\in[0,1),
\end{align*}
which in view of Lemma \ref{rightsol} and Lemma \ref{leftsol}, leads to
\begin{align*}
x(t)&=a_{1}\cosh[\frac{\mu-1}{2-\mu}t]+a_{2}\sinh[\frac{\mu-1}{2-\mu}t]
-\int_{t}^{0}e^{\frac{\mu-1}{2-\mu}(\tau-t)}\,y(\tau)d\tau,\hspace{0.4cm}t\in[-1,0],\\
x(t)&=b_{1}\cosh[\frac{\mu-1}{2-\mu}t]+b_{2}\sinh[\frac{\mu-1}{2-\mu}t]
-\int_{0}^{t}e^{\frac{\mu-1}{2-\mu}(t-\tau)}\,y(\tau)d\tau,\hspace{0.4cm}t\in[0,\,1],
\end{align*}
which on employing the BCs \eqref{cfir}, reduces to
\begin{align*}
x(t)&=\frac{\cosh[\frac{\mu-1}{2-\mu}t]}{\cosh[\frac{\mu-1}{2-\mu}]}\int_{-1}^{0}e^{\frac{\mu-1}{2-\mu}(1+\tau)}\,y(\tau)d\tau
-\int_{t}^{0}e^{\frac{\mu-1}{2-\mu}(\tau-t)}\,y(\tau)d\tau,\hspace{0.4cm}t\in[-1,0],\\
x(t)&=\frac{\cosh[\frac{\mu-1}{2-\mu}t]}{\cosh[\frac{\mu-1}{2-\mu}]}\int_{0}^{1}e^{\frac{\mu-1}{2-\mu}(1-\tau)}\,y(\tau)d\tau
-\int_{0}^{t}e^{\frac{\mu-1}{2-\mu}(t-\tau)}\,y(\tau)d\tau,\hspace{0.4cm}t\in[0,\,1],
\end{align*}
which is equivalent to \eqref{ir}.
\end{proof}

In the following Lemma \ref{gbound}, we present some properties of the Green's function \eqref{gf}.

\begin{lem}\label{gbound}
The Green's function \eqref{gf} satisfies
\begin{itemize}
\item[(1).] $G:[-1,\,1]\times[-1,\,1]\rightarrow[0,\infty)$ is continuous and positive on $(-1,\,1)\times(-1,\,1)$.
\item[(2).] $G(t,\tau)=G(-t,-\tau)$ for all $(t,\tau)\in[-1,\,1]\times[-1,\,1]$.
\item[(3).] $G(t,\tau)\leq1$ for all $(t,\tau)\in[-1,\,1]\times[-1,\,1]$.
\end{itemize}
\end{lem}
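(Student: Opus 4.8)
Throughout, write $\beta=\frac{\mu-1}{2-\mu}$, so that $\beta>0$ and $\cosh\beta>0$ for $\mu\in(1,2)$; I would establish the three assertions by direct analysis of the four branches of \eqref{gf}, taking them in the order (2), (1), (3). The symmetry (2) is purely algebraic and I would dispatch it first. The involution $(t,\tau)\mapsto(-t,-\tau)$ carries the region $-1\le\tau\le t\le0$ onto $0\le-t\le-\tau\le1$ and the region $-1\le t\le\tau\le0$ onto $0\le-\tau\le-t\le1$, so it interchanges the first branch with the fourth and the second with the third. Since $\cosh$ is even, the first branch evaluated at $(-t,-\tau)$ is $\frac{\cosh(\beta(-t))}{\cosh\beta}e^{\beta(1-(-\tau))}=\frac{\cosh(\beta t)}{\cosh\beta}e^{\beta(1+\tau)}$, which is exactly the fourth branch at $(t,\tau)$; the second/third pair is handled identically, giving $G(t,\tau)=G(-t,-\tau)$ with no estimation.

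For (1) I would first record the hyperbolic identity $\cosh(\beta t)\,e^{\beta}-\cosh\beta\,e^{-\beta t}=\sinh(\beta(1+t))$, which rewrites the second branch in the closed form $\frac{e^{\beta\tau}}{\cosh\beta}\sinh(\beta(1+t))$ and, via the symmetry already proved, the third branch as $\frac{e^{-\beta\tau}}{\cosh\beta}\sinh(\beta(1-t))$. Positivity on $(-1,1)\times(-1,1)$ is then immediate branch by branch: the first and fourth branches are products of strictly positive factors, while the rewritten second and third are positive because $\sinh(\beta(1\pm t))>0$ for $t\in(-1,1)$; the vanishing at $t=\pm1$ and nonnegativity up to the boundary fall out of the same expressions. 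Each branch is a composition of continuous functions on its closed triangle, so continuity reduces to matching along the interfaces; the one substantive check is across the diagonal $\tau=t$, where the two branches of each block differ precisely by the term $e^{\beta(\tau-t)}$, so the matching hinges on the behaviour of this term at $\tau=t$.

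The uniform bound (3) is where I expect the real work to lie, and it localizes to the same diagonal. My plan is to reduce it to a one-variable problem: on each triangle the $\tau$-dependence is monotone (the first and second branches increase in $\tau$ through $e^{\beta\tau}$, the third and fourth decrease through $e^{-\beta\tau}$), so for fixed $t$ the supremum is attained on the diagonal $\tau=t$ or on an edge $\tau\in\{-1,0,1\}$, and $G\le1$ becomes an inequality for a single function of $t\in[-1,1]$. For the second and third branches this yields a value bounded by $\tanh\beta<1$, so they cause no trouble; the main obstacle is the first and fourth branches, whose extremal estimate is genuinely tight at the junction $t=\tau=0$. There these branches approach $\frac{e^{\beta}}{\cosh\beta}=\frac{2}{1+e^{-2\beta}}$, so the corner $(0,0)$, where the two representations of $G$ meet along the diagonal, is precisely the point at which the bound must be examined most carefully; it is here, using $\cosh\beta\ge1$ together with the monotonicity above, that I would concentrate the entire argument.
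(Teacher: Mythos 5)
Your treatment of part (2) and of positivity in part (1) is sound: the symmetry argument is pure algebra, and the identity $\cosh(\beta t)e^{\beta}-\cosh\beta\,e^{-\beta t}=\sinh(\beta(1+t))$ (with $\beta=\frac{\mu-1}{2-\mu}$) correctly rewrites the second and third branches and yields positivity and the boundary vanishing. (The paper itself offers no proof to compare against --- it says only ``The proof is obvious'' --- so the substance of your sketch is all there is.) The problem is that the two checks you explicitly defer are exactly the points where the lemma, as stated, breaks down, and your outline presents them as completable when they are not.

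First, the continuity matching across the diagonal fails. You note that the two branches of each block ``differ precisely by the term $e^{\beta(\tau-t)}$'' and that the matching ``hinges on the behaviour of this term at $\tau=t$''; but at $\tau=t$ that term equals $1$, not $0$, so $G$ as defined in \eqref{gf} has a unit jump across the diagonal (and is not even single-valued there: at $(0,0)$ the first and fourth branches give $\frac{e^{\beta}}{\cosh\beta}$ while the second and third give $\tanh\beta$). Hence the continuity assertion in (1) cannot be proved for $G$ as written. Second, and more seriously for your plan, the bound in (3) is false. Your own reduction is correct up to the last step: on the first branch the supremum in $\tau$ is attained at $\tau=t$, giving $\frac{\cosh(\beta t)e^{\beta(1+t)}}{\cosh\beta}=\frac{1+e^{2\beta t}}{1+e^{-2\beta}}$, which increases to $\frac{2}{1+e^{-2\beta}}=\frac{e^{\beta}}{\cosh\beta}$ as $t\to0^-$. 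You computed this value but did not observe that it is \emph{strictly greater than} $1$ for every $\beta>0$ (equivalently every $\mu\in(1,2)$), since $e^{\beta}>\cosh\beta$; the inequality $\cosh\beta\ge1$ you propose to invoke goes the wrong way and cannot rescue the estimate. Moreover this excess is not confined to the diagonal: for instance $G(0,-\epsilon)=\frac{e^{\beta(1-\epsilon)}}{\cosh\beta}>1$ for all small $\epsilon>0$, an unambiguous, single-branch point. The sharp uniform bound is $G\le\frac{e^{\beta}}{\cosh\beta}=1+\tanh\beta<2$, not $G\le1$, so no completion of your step (3) exists; any honest write-up must either replace (3) by this weaker bound (and propagate the extra factor through the proof of Theorem \ref{mainth}, where $G\le1$ is used) or correct the Green's function.
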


\begin{proof}
The proof is obvious.
\end{proof}

\section{Main Result}\label{main}
We make the following assumptions.
\begin{itemize}
\item[(A1).] For each $x>0$, $f(0,x)=0$, $f(t,x)=f(-t,x)$ for all $t\in(-1,\,1)$. There exist $q\in C[0,1)$, $u\in C(0,\infty)$ decreasing, and $v\in C[0,\infty)$ increasing such that
\begin{align*}|f(t,x)|\leq q(|t|)(u(x)+v(x)),\hspace{0.4cm}t\in(-1,\,1),\hspace{0.4cm}x\in(0,\infty),\end{align*}
\item[(A2).] There exist a constant $R\geq\sigma_{R}(0)$ such that, for $t\in(-1,\,1)$ and $x\in(0,R]$, $f(t,x)\geq\psi_{R}(|t|)$, where $\psi_{R}:[0,\,1)\rightarrow[0,\infty)$, $\sigma_{R}(t):=\int_{0}^{1}G(t,\tau)\psi_{R}(\tau)d\tau<\infty$.
Moreover,
\begin{align*}
\int_{0}^{1}q(t)dt<\infty,\text{ and }\int_{0}^{1}q(t)u(\sigma_{R}(t))dt<\infty.
\end{align*}
\begin{align*}
\frac{R}{\left(1+\frac{v(R)}{u(R)}\right)\int_{0}^{1}q(t)u(\sigma_{R}(t))dt}>1.
\end{align*}
\end{itemize}

In view of $(A2)$, choose $\varepsilon>0$ such that
\begin{equation}\label{eps}
\frac{R-\varepsilon}{\left(1+\frac{v(R)}{u(R)}\right)\int_{0}^{1}q(t)u(\sigma_{R}(t))dt}\geq1.
\end{equation}
Let $X=\{x:x\in C[-1,\,1],\,x(t)=x(-t)\text{ for }t\in[-1,\,1]\}$. For each $m\in\mathbb{N}$ with $\frac{1}{m}<\varepsilon$, define $T_{m}:X\rightarrow X$ by
\begin{equation}\label{mapt}
T_{m}x(t)=\int_{\Lambda}G(t,\tau)\,f\left(\tau,\min\{\max\{x(\tau)+\frac{1}{m},\frac{1}{m}\},R\}\right)d\tau,\hspace{0.4cm}t\in\Lambda.
\end{equation}

\begin{thm}\label{mainth}
Assume that $(A1)$ and $(A2)$ hold. Then the Caputo-Fabrizio fractional SIDBVP \eqref{mp} has a symmetric positive solution.
\end{thm}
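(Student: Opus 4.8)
The plan is to obtain a solution of \eqref{mp} as a uniform limit of fixed points of the regularized operators $T_m$ in \eqref{mapt}, whose truncation $\min\{\max\{x(\tau)+\frac1m,\frac1m\},R\}$ forces the second argument of $f$ into the compact interval $[\frac1m,R]\subset(0,\infty)$ on which $f$ is continuous; thus $T_m$ sidesteps the singularity at $x=0$, while the hypotheses in $(A2)$ will keep the limit bounded away from $0$ as $m\to\infty$. Throughout, the representation \eqref{ir} together with the sign and symmetry of the Green's function from Lemma \ref{gbound} convert the differential problem into a fixed-point problem on the Banach space $X$.

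First I would check that each $T_m$ is a completely continuous self-map of $X$ and invoke Schauder's theorem. The symmetry $T_mx\in X$ follows from $G(t,\tau)=G(-t,-\tau)$ in Lemma \ref{gbound}(2) together with $f(t,x)=f(-t,x)$ from $(A1)$. On $[\frac1m,R]$ the growth bound of $(A1)$, with $u$ decreasing and $v$ increasing, gives the $m$-dependent but $x$-uniform majorant $|f(\tau,\cdot)|\le q(|\tau|)\,(u(\frac1m)+v(R))$, which is integrable since $\int_0^1 q<\infty$; combined with $G\le1$ from Lemma \ref{gbound}(3) this shows $T_m$ maps $X$ into a fixed ball, and with the continuity of $G$ and dominated convergence it shows $T_m$ is continuous and sends bounded sets to equicontinuous ones, hence relatively compact by Arzel\`a--Ascoli. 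Schauder's theorem then produces $x_m\in X$ with $x_m=T_mx_m$.

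The crux is a pair of a priori bounds on $x_m$, uniform in $m$, that deactivate both truncations. Since the truncated argument always lies in $(0,R]$, $(A2)$ gives $f(\tau,\cdot)\ge\psi_R(|\tau|)\ge0$, so positivity of $G$ yields $x_m(t)\ge\sigma_R(t)$; in particular $x_m$ never reaches the singular value $0$. Feeding this back in, and using $\sigma_R(\tau)\le R$ (so that $u$ decreasing gives $u(\,\cdot\,)\le u(\sigma_R(\tau))$ and $u(R)\le u(\sigma_R(\tau))$) together with $v(\,\cdot\,)\le v(R)$, the $(A1)$ estimate collapses to $x_m(t)\le\bigl(1+\frac{v(R)}{u(R)}\bigr)\int_0^1 q(\tau)u(\sigma_R(\tau))\,d\tau$, which by \eqref{eps} is at most $R-\varepsilon$. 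Since $\frac1m<\varepsilon$, this gives $x_m(t)+\frac1m<R$, so the cutoff at $R$ is inactive as well, and each $x_m$ solves the genuine equation $x_m(t)=\int_\Lambda G(t,\tau)f(\tau,x_m(\tau)+\frac1m)\,d\tau$ with $\sigma_R(t)\le x_m(t)\le R-\varepsilon$. I expect this squeezing of $x_m$ between $\sigma_R$ and $R$, which is exactly where the two integrability conditions and the normalizing inequality of $(A2)$ are balanced, to be the main obstacle.

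Finally I would let $m\to\infty$. The common integrable majorant $h(\tau):=q(|\tau|)\,(u(\sigma_R(\tau))+v(R))$ furnished by $(A2)$, together with the continuity of $G$ on $[-1,1]\times[-1,1]$, makes $\{x_m\}$ uniformly bounded and equicontinuous, so Arzel\`a--Ascoli extracts a subsequence converging uniformly to some $x\in X$. Because $x(\tau)\ge\sigma_R(\tau)>0$ on $(-1,1)$, continuity of $f$ gives $f(\tau,x_m(\tau)+\frac1m)\to f(\tau,x(\tau))$ pointwise under the uniform bound $h$, so dominated convergence passes the limit inside the integral to yield $x(t)=\int_\Lambda G(t,\tau)f(\tau,x(\tau))\,d\tau$. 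By Lemma \ref{lemir} this $x$ solves \eqref{mp}; it is symmetric as a member of $X$, positive on $(-1,1)$ by $x\ge\sigma_R$, and satisfies $x(\pm1)=0$ because $G(\pm1,\tau)=0$, which completes the construction of a symmetric positive solution.
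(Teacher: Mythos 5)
Your proposal follows essentially the same route as the paper's own proof: Schauder's theorem applied to the truncated operators $T_m$, the a priori squeeze $\sigma_R(|t|)\le x_m(t)\le R-\varepsilon$ obtained from $(A2)$, $(A1)$ and \eqref{eps}, Arzel\`a--Ascoli to extract a uniform limit, and Lemma \ref{lemir} to convert the limiting integral equation back into \eqref{mp}. If anything, you are more careful than the paper at two points it glosses over --- verifying complete continuity of $T_m$ before invoking Schauder, and retaining the $+\frac1m$ shift in the fixed-point identity so that the passage to the limit is justified by dominated convergence with the explicit majorant $q(|\tau|)\bigl(u(\sigma_R(\tau))+v(R)\bigr)$.
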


\begin{proof}
In view of $(A1)$ and Schauder's fixed point theorem the map $T_{m}$ defined by \eqref{mapt} has a fixed point $x_{m}\in X$. Thus
\begin{equation}\label{fp}
x_{m}(t)=\int_{\Lambda}G(t,\tau)f\left(\tau,\min\{\max\{x_{m}(\tau)+\frac{1}{m},\frac{1}{m}\},R\}\right)d\tau,\hspace{0.4cm}t\in\Lambda,
\end{equation}
which in view of $(A2)$ and Lemma \ref{gbound}, leads to
\begin{equation}\label{lb}\begin{split}
x_{m}(t)&\geq\int_{\Lambda}G(t,\tau)\,\psi_{R}(|\tau|)d\tau,\hspace{0.4cm}t\in\Lambda,\\
&=\sigma_{R}(|t|),\hspace{0.4cm}t\in[-1,\,1].
\end{split}\end{equation}
Also \eqref{fp} in view of Lemma \ref{gbound}, $(A1)$, \eqref{lb} and \eqref{eps}, leads to
\begin{align*}\label{fp}
\begin{split}
x_{m}(t)&=\int_{\Lambda}G(t,\tau)f\left(\tau,\min\{\max\{x_{m}(\tau)+\frac{1}{m},\frac{1}{m}\},R\}\right)d\tau,\hspace{0.4cm}t\in\Lambda,\\
&\leq\int_{\Lambda}G(t,\tau)q(|\tau|)u\left(\min\{\max\{x_{m}(\tau)+\frac{1}{m},\frac{1}{m}\},R\}\right)\\
&\hspace{0.3cm}\left(1+\frac{v(\min\{\max\{x_{m}(\tau)+\frac{1}{m},\frac{1}{m}\},R\})}{u(\min\{\max\{x_{m}(\tau)+\frac{1}{m},\frac{1}{m}\},R\})}\right)d\tau,\hspace{1.8cm}t\in\Lambda,\\
&\leq\int_{\Lambda}G(t,\tau)q(|\tau|)u(\sigma_{R}(|\tau|))\left(1+\frac{v(R)}{u(R)}\right)d\tau,\hspace{1.9cm}t\in\Lambda,\\
&\leq \left(1+\frac{v(R)}{u(R)}\right)\int_{0}^{1}q(\tau)u(\sigma_{R}(\tau))d\tau,\hspace{0.4cm}t\in[-1,\,1],
\end{split}
\end{align*}
which in view of \eqref{eps}, leads to
\begin{equation}\label{ub}\begin{split}
x_{m}(t)\leq R-\varepsilon,\hspace{0.4cm}t\in[-1,\,1].
\end{split}\end{equation}

Consequently, in view of \eqref{lb} and \eqref{ub}, we have

\begin{align*}
\sigma_{R}(|t|)\leq x_{m}(t)<R,\hspace{0.4cm}t\in[-1,\,1],
\end{align*}
which shows that the sequence $\{x_{n}\}_{n=m}^{\infty}$ is uniformly bounded on $[-1,1]$. Moreover, since $G(t,\tau)$ is uniformly continuous on $[-1,\,1]\times[-1,\,1]$, the sequence $\{x_{n}\}_{n=m}^{\infty}$ is equicontinuous on $[-1,\,1]$. Thus by Arzel\`{a}-Ascoli Theorem the sequence $\{x_{n}\}_{n=m}^{\infty}$ is relatively compact and consequently there exist a subsequence $\{x_{n_{k}}\}_{k=1}^{\infty}$ converging uniformly to $x\in X$. Moreover,
\begin{align*}
x_{n_{k}}(t)=\int_{\Lambda}G(t,\tau)f\left(\tau,x_{n_{k}}(\tau)\right)d\tau,\hspace{0.4cm}t\in\Lambda,
\end{align*}
as $k\rightarrow\infty$, we obtain
\begin{equation}\label{intsol}
x(t)=\int_{\Lambda}G(t,\tau)f\left(\tau,x(\tau)\right)d\tau,\hspace{0.4cm}t\in\Lambda,
\end{equation}
which in view of Lemma \ref{lemir}, leads to
\begin{align*}
(2-\mu)\,{}^{CF}D_{0}^{\,\mu}x(t)+f(t,x(t))&=\left(\frac{\mu-1}{2-\mu}\right)^{2}
\begin{cases}
\int_{t}^{0}e^{-\frac{\mu-1}{2-\mu}(\tau-t)}\,x(\tau)d\tau,\hspace{0.4cm}&t\in(-1,0]\\
\int_{0}^{t}e^{-\frac{\mu-1}{2-\mu}(t-\tau)}\,x(\tau)d\tau,\hspace{0.4cm}&t\in[0,1),
\end{cases}\\
x(\pm1)=x'(0^{\pm})&=0.
\end{align*}
Also, ${}^{CF}D_{0}^{\,\mu}\,x\in C(-1,\,1)$. Further, from \eqref{intsol} in view of $(A2)$ and Lemma \ref{gbound}, we have
\begin{align*}
x(t)\geq\sigma_{R}(|t|),\hspace{0.4cm}t\in[-1,\,1],
\end{align*}
which shows that $x(t)>0$ for $t\in(-1,\,1)$. Hence $x\in C[-1,\,1]$ with ${}^{CF}D_{0}^{\,\mu}\,x\in C(-1,\,1)$ is a symmetric positive solution of the Caputo-Fabrizio fractional SIDBVP \eqref{mp}.
\end{proof}


\begin{thebibliography}{99}

\bibitem{afshari}H. Afshari, H. Marasi, H. Aydi, Existence and uniqueness of positive solutions for boundary value problems of fractional differential equations, Filomat 31:9(2017) 2675-2682.

\bibitem{agarwal}R.P. Agarwal, D. O'Regan, S. Stanek, Positive solutions for Dirichlet problems of singular nonlinear fractional differential equations, J. Math. Anal. Appl. 371 (2010) 57-68.

\bibitem{ahmad}B. Ahmad, J.J. Nieto, Existence results for nonlinear boundary value problems of fractional integro-differential equations with integral boundary conditons, Boundary Value Problems (2009) 1-11.

\bibitem{naseer1}arXiv:1904.07109

\bibitem{naseer2}Naseer Ahmad Asif, Existence of symmetric positive solutions for a Caputo fractional singular boundary value problem, presented at ``6th International IFS and Contemporary Mathematics Conference", June 7-10, 2019, Mersin University, Mersin, Turkey, (submitted for publication).

\bibitem{baleanu}D. Baleanu, A. Mousalou, S. Rezapour, On the existence of solutions for some infinite coefficient-symmetric Caputo-Fabrizio fractional integro-differential equations, Bound. Value Probl. 2017 (2017) 9 pages.

\bibitem{CF}M. Caputo, M. Fabrizio, A New Definition of Fractional Derivative without Singular Kernel, Progr. Fract. Differ. Appl. 1:2, (2015) 73-85.

\bibitem{chang}Y.K. Chang, V. Kavitha, M. Mallika Ajunnan, Existence and uniqueness of mild solutions to semilinear integro-differential equation of fractional order, Nonlinear Analysis 71 (2009) 5551-5559.

\bibitem{eloe}P.W. Eloe, J.W. Lyons, J.T. Neugebauer, An ordering on Green's functions for a family of two-point boundary value problems for fractional differential equations, Commun. Appl. Anal. 19 (2015) 453-462.

\bibitem{jiang}W. Jiang, The existence of solutions to boundary value problems of fractional differential equations at resonance, Nonlinear Analysis, 74 (2011) 1987-1994.

\bibitem{kilbas} A.A. Kilbas, H.M. Srivastava, J.J. Trujillo, Theory and Applications of Fractional Differential Equations, North-Holland Mathematics Studies, vol. 204, Elsevier Science B.V., Amsterdam, 2006.

\bibitem{kosmatov}N. Kosmatov, Integral equations and initial value problems for nonlinear differential equations of fractional order, Nonlinear Analysis, 70 (2009) 2521-2529.

\bibitem{lak1} V. Lakshmikantham, Theory of fractional functional differential equations, Nonlinear Analysis 69 (2008) 3337-3343.

\bibitem{lak2}V. Lakshmikantham, M.R.M. Rao, Theory of Integro-Differential Equations. Stability and Control: Theory, Methods and Applications, vol. 1. Gordon and Breach Science Publishers, Lausanne, 1995.

\bibitem{lak3}V. Lakshmikantham, A.S. Vatsala, Basic theory of fractional differential equations, Nonlinear Analysis 69 (2008) 2677-2682.

\bibitem{losada}J. Losada, J. J. Nieto, Properties of a New Fractional Derivative without Singular Kernel, Progr. Fract. Differ. Appl., 1 (2015), 87-92.

\bibitem{miller}K. S. Miller, B. Ross, An Introduction to Fractional Calculus and Fractional Differential Equations, John Wiley and Sons, New York, 1993.

\bibitem{podlubny}I. Podlubny, Fractional Differential Equations. An Introduction to Fractional Derivatives, Fractional Differential Equations, to Methods of Their Solutions and Some of Their Applications, Mathematics in Science and Engineering, vol. 198, Academic Press, San Diego, 1999.

\bibitem{shahed}M. El-Shahed, Positive solutions for boundary value problem of nonlinear fractional differential equation, Abstr. Appl. Anal. 2007 (2007) 8 pages. Article ID 10368.

\bibitem{stanek}S. Stanek, The existence of positive solutions of singular fractional boundary value problems, Comput. Math. Appl. 62 (2011) 1379-1388.

\bibitem{sun}Y. Sun, Positive solutions of Sturm-Liouville boundary value problems for singular nonlinear second-order impulsive integro-differential equation in Banach spaces, Bound. Value Probl. 2012 (2012), 18 pages.

\bibitem{xu}X. Xu, D. Jiang, C. Yuan, Multiple positive solutions for the boundary value problem of a nonlinear fractional differential equation, Nonlinear Anal. 71 (2009) 4676-4688.

\bibitem{zhang}S.-Q. Zhang, Positive solutions for boundary value problems of nonlinear fractional differential equations. Electron. J. Differential Equations 36 (2006) 1-12.

\bibitem{zhangmao}X. Zhang, C. Mao, Y. Wu, H. Su, Positive solutions of a singular nonlocal fractional order differetial system via Schauder's fixed point theorem, Abstr. Appl. Anal. (2014) Art. ID 457965.

\bibitem{zhanghu}S. Zhanga, L. Hu, S. Sun, The uniqueness of solution for initial value problems for fractional differential equation involving the Caputo-Fabrizio derivative, J. Nonlinear Sci. Appl. 11 (2018) 428-436.

\bibitem{zhao}Y. Zhao, S. Sun, Z. Han, Q. Li, The existence of multiple positive solutions for boundary value problems of nonlinear fractional differential equations, Commun. Nonlinear Sci. Numer. Simul. 16 (2011) 2086-2097.

\end{thebibliography}
\end{document}